\newtheorem{theorem}{Theorem}[section]
\newtheorem{lemma}[theorem]{Lemma}
\newtheorem{corollary}[theorem]{Corollary}
\newtheorem{fact}[theorem]{Fact}
\theoremstyle{remark}
\theoremstyle{definition}
\numberwithin{equation}{section}
\begin{document}

\title[diffeomorphisms on manifolds]{Anti-classification results for conjugacy of diffeomorphisms on manifolds}


\author{Bo Peng}
\address{McGill University}
\curraddr{Department of Mathmatics and Statistics, McGill University. 805 Sherbrooke Street West Montreal, Quebec, Canada, H3A 2K6}
\email{bo.peng3@mail.mcgill.ca}
\thanks{}


\date{}

\dedicatory{}

\begin{abstract}
We show that the topological conjugacy relation of diffeomorphisms on any manifold of dimension at least 2 is not classifiable by countable structures. This answers a question of Foreman and Gorodetski. We also prove that $E_0$ is reducible into the topological conjugacy relation of minimal diffeomorphisms on the 2-torus, which answers a question of Foreman.
\end{abstract}

\maketitle
\section{Introduction}

In the recent 40 years, complexity theory for equivalence relations has been developed by Kechris, Hjorth, Louveau and others (see \cite{Gaobook} and \cite{Kecbook}). Let $E$ and $F$ be two equivalence relations on Polish spaces $X$ and $Y$, respectively. A Borel function from $X$ to $Y$ is called a \textbf{Borel reduction} from $E$ to $F$, if for any $x_1,x_2\in X$, we have
$$
x_1Ex_2\,\,\mbox{if and only if}\,\,f(x_1)Ff(x_2).
$$
We say $E$ is \textbf{Borel reducible} to $F$ if there is a Borel reduction from $E$ to $F$ and denote by $E\leq_B F$. If an equivalence relation $E$ is Borel reducible to $F$ then we regrad $F$ as a more complicated equivalence relation. For many classification problems appearing in mathematics, their exact Borel complexity has been computed (see \cite{Sabokcomplete}, \cite{Zielinski}, \cite{Torsion-free}).

Common measure of complexity is connected with the following questions:

\begin{enumerate}
    \item Is the equivalence relation \textbf{smooth}?
    \item Is the equivalence relation Borel \textbf{classifiable by countable structures}?
    \item Is the equivalence relation \textbf{Borel}?
\end{enumerate}

Note that if the answer to either $(2)$ or $(3)$ is negative, then the answer to $(1)$ is also negative. If the answer to some of the above questions is negative, then this gives a rigorous proof that the classification is impossible and we call such a result an anti-classification theorem. 

Well-studied examples of classification problems in dynamical systems include conjugacy problems in ergodic theory (see \cite{Foremanturbulent}, \cite{Forremannoborel}, \cite{Kundetimechange}), in symbolic dynamics (see \cite{bowen's}, \cite{MarcinToeplitz}, \cite{KayaToeplitz}), in Cantor systems (see \cite{NonBorelCantorminimal}, \cite{Kayapointed}, \cite{Vejnartransitive}) and on the circle (see 
\cite{Kundesmoothconjugacy}, \cite{Vejnaronedim}). 

 In the 1960s, Smale \cite{ICM, Smale} proposed a program classifying diffeomorphism on smooth manifold by topological conjugacy. Foreman and Gorodetski made significant contributions to Smale's program. In \cite{ForemanGoropaper}, they proved that for a manifold $M$ with dimension $n$, the topological conjugacy relation of diffeomorphisms on $M$ is 
\begin{enumerate}
    \item not smooth if $n\geq 2$;
    \item not Borel if $n\geq 5$.
\end{enumerate}

Recently, Foreman and Weiss, and independently Vejnar generalized non-Borelness to all manifolds.

In \cite[Question 4]{ForemanGoropaper}, Foreman and Gorodetski ask if the topological conjugacy relation of diffeomorphism on a manifold reducible to an $S_{\infty}$ action? 

We answer this question as follows:

\begin{theorem} \label{Main 1}
    Let $M$ be a manifold of dimension greater than equal to $2$, then the topological conjugacy relation of $C^{\infty}$-diffeomorphisms on $M$ is not classifiable by countable structures. 
\end{theorem}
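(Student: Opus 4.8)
The plan is to produce a Borel reduction into the topological conjugacy relation of $C^{\infty}$-diffeomorphisms of $M$ from some equivalence relation already known not to be Borel reducible to any Borel action of $S_{\infty}$ (see \cite{Gaobook}, \cite{Kecbook}); the most convenient target is the orbit equivalence relation of a concrete turbulent Polish group action, so that Hjorth's turbulence theorem applies, although an $E_{1}$-type relation would do as well. The first step is a dimensional and local reduction: fix a smoothly embedded closed ball $B\subset M$ (which exists because $\dim M\ge 2$) and reduce to building a model family $S_{\omega}\in\mathrm{Diff}^{\infty}(D^{2},\partial D^{2})$ of diffeomorphisms of the $2$-disk that are the identity near $\partial D^{2}$; for $\dim M>2$ one runs this inside a $2$-dimensional slice of $B$ and extends by a suitable transverse contraction. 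The one delicate point in the transplantation is that an ambient conjugating homeomorphism of $M$ need not preserve $B$; this is dealt with as in \cite{ForemanGoropaper} once the model is arranged so that the closure of its recurrent set is all of $D^{2}$, for then the fixed-point set of the transplanted map $T_{\omega}$ is exactly $M\setminus\mathrm{int}(B)$, forcing every conjugacy to map $B$ to $B$ and hence to restrict to a conjugacy of the models (and analogously for the transverse slice when $\dim M>2$).

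The heart of the argument is the construction of the disk models by the Anosov--Katok method of fast periodic approximation. Let $\Omega$ be a Polish space carrying the target relation $E$ — concretely, a space of sequences $\omega=(\omega_{n})$ of geometric data driving an iterated conjugacy scheme $h_{n}=g_{1}(\omega)\cdots g_{n}(\omega)$, $T_{n}=h_{n}R_{\alpha_{n+1}}h_{n}^{-1}$, with $\alpha_{n}\to\alpha$ Liouville and the $C^{\infty}$-convergence of $T_{n}$ so fast that $T_{\omega}:=\lim_{n}T_{n}$ is a $C^{\infty}$ diffeomorphism. The scheme should be set up so that: (a) $\omega\mapsto T_{\omega}$ is continuous into $\mathrm{Diff}^{\infty}(D^{2},\partial D^{2})$; (b) if $\omega\mathrel{E}\omega'$ then the discrepancy between the two schemes is, coordinate by coordinate, small enough that an infinite product of correcting diffeomorphisms converges to a topological conjugacy between $T_{\omega}$ and $T_{\omega'}$; and (c) conversely, $T_{\omega}$ and $T_{\omega'}$ are topologically conjugate only if $\omega\mathrel{E}\omega'$. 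Direction (b) is a direct, if technical, construction, in the same spirit as the $E_{0}$-reduction the paper proves separately. Direction (c) is the main obstacle: one must read off enough topological conjugacy invariants from the dynamics of $T_{\omega}$ to recover the $E$-class of $\omega$ — the natural candidates are the rotation data on a canonical nested family of $T_{\omega}$-invariant circles together with the induced data on the countably many shrinking sub-disks that carry the independent pieces of the construction — and show that a conjugacy is forced to respect all of them simultaneously. The real subtlety is calibrating the coupling between these independent pieces so that the resulting relation $E$ is genuinely not classifiable by countable structures — loose enough to make the governing group action turbulent (or to produce an $E_{1}$-like tail phenomenon) rather than merely a countable product of copies of $E_{0}$, which would only reprove non-smoothness.

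Granting the model, the theorem is immediate: $\omega\mapsto T_{\omega}$ is a continuous, hence Borel, reduction of $E$ into topological conjugacy of $C^{\infty}$-diffeomorphisms of $M$, so this conjugacy relation cannot be classifiable by countable structures either — by Hjorth's turbulence theorem in the turbulent case, and by the Kechris--Louveau rigidity of $E_{1}$ in the alternative. Smoothness is automatic since Anosov--Katok limits are $C^{\infty}$, and the passage from $D^{2}$ to an arbitrary manifold of dimension $\ge 2$ is exactly the uniform local transplantation described above.
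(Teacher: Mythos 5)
Your high-level strategy is the right one and matches the paper's: fix a relation that is not classifiable by countable structures (the paper uses $[0,1]^{\omega}/c_0$, which is induced by a turbulent action), Borel reduce it into conjugacy of $C^{\infty}$-diffeomorphisms of a cube/disk that are the identity near the boundary, and paste into $M$. But the two steps you yourself flag as ``the main obstacle'' and ``the real subtlety'' are not side issues --- they are the entire content of the theorem, and your proposal does not supply them. Concretely: (i) the source relation $E$ is never specified, and you defer the verification that the coupling can be made ``loose enough'' to be non-classifiable yet tight enough that direction (b) still produces conjugacies --- this tension is exactly what a proof must resolve; (ii) for direction (c) you propose reading off rotation data on invariant circles of an Anosov--Katok limit, but rotation numbers on such circles are preserved only up to orientation and give at best countably much data per conjugacy class unless one proves a strong rigidity statement forcing any conjugacy to match up the entire nested family --- no mechanism for this is given, and it is precisely the hard direction. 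A further concrete problem: for $\dim M>2$ you extend the disk model by ``a suitable transverse contraction,'' which kills recurrence in the transverse direction and is incompatible with your own localization device (that the closure of the recurrent set fills the model, so that the fixed-point set is exactly the complement of the window).

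For contrast, the paper avoids all of this with a much more elementary construction. It places shrinking disks $D(\alpha)_n$ in slabs $H_n=[\frac{1}{n+1},\frac{1}{n}]\times[0,1]^{d-1}$, with the last coordinate of the center $a^{\alpha}_n$ encoding $\alpha(n)$ on even slabs and a fixed dense sequence $p_n$ on odd slabs, and takes $f_{\alpha}$ to be a rotation by angle $2\pi/2^{n+1}$ on $D(\alpha)_n$ tapered to the identity, and the identity elsewhere. The point $a^{\alpha}_n$ is then the unique point of $M$ with a punctured neighborhood consisting of $2^{n+1}$-periodic points, so any conjugacy $h$ must satisfy $h(a^{\alpha}_n)=a^{\beta}_n$ with no need to show $h$ preserves the window; non-$c_0$-equivalence is then detected purely by continuity of $h$ applied to interleaved convergent subsequences of the marked points, and $c_0$-equivalence yields a conjugacy by an explicit convergent sequence of disk translations. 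Your Anosov--Katok route is not unreasonable in principle, but as written it is a program rather than a proof.
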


Our proof builds on the work of Gorodetski and Foreman \cite{ForemanGoropaper} on non-smoothness of the the topological conjugacy relation of $C^{\infty}$-diffeomorphisms.

In \cite{Hjorthbook} Hjorth proved that the topological conjugacy relation of homeomorphisms on the unit interval is classifiable by countable structures and the same argument works for the circle. This leads to the following corollary.
\begin{corollary}
  Let $M$ be a compact manifold of dimension $n$, then the topological conjugacy relation of diffeomorphisms on $M$ is classifiable by countable structures if and only if $n=1$. 
\end{corollary}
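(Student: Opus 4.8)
The plan is to establish the two implications separately: Theorem~\ref{Main 1} gives one direction essentially for free, and Hjorth's theorem gives the other. (We take $\dim M\geq 1$; the $0$-dimensional case, where $\mathrm{Diff}(M)$ is a finite symmetric group and conjugacy is trivially classifiable, is degenerate and tacitly excluded.)

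For the direction ``classifiable $\Rightarrow n=1$'', equivalently ``$n\geq 2\Rightarrow$ not classifiable'': a compact $M$ with $\dim M\geq 2$ falls directly under Theorem~\ref{Main 1}. If that theorem is read for connected manifolds only, choose a connected component $N$ of dimension $\geq 2$; the map $f\mapsto f\sqcup\mathrm{id}_{M\setminus N}$ is a Borel reduction of topological conjugacy on $\mathrm{Diff}(N)$ into topological conjugacy on $\mathrm{Diff}(M)$, because a diffeomorphism topologically conjugate to the identity equals the identity, so no component-permuting conjugacy can collapse the copy of $\mathrm{Diff}(N)$. Since any equivalence relation lying above a non-classifiable one in the $\leq_B$ order is itself non-classifiable, this direction is complete.

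For the direction ``$n=1\Rightarrow$ classifiable'': a compact $1$-manifold is diffeomorphic to a finite disjoint union of circles and closed intervals. A homeomorphism of $M$ permutes the components and cannot send a circle to an interval, so $\mathrm{Diff}(M)$ splits as $\mathrm{Diff}\big((S^1)^{\sqcup a}\big)\times\mathrm{Diff}\big(([0,1])^{\sqcup b}\big)$ with topological conjugacy the product of the two factor relations; as classifiability by countable structures is preserved under finite products, it suffices to treat each factor. Consider $\mathrm{Diff}\big((S^1)^{\sqcup a}\big)$ (the interval factor is the same). To $f$ assign the permutation $\sigma_f$ of the $a$ circles that $f$ induces, together with, for each cycle of $\sigma_f$, the topological conjugacy class of the first-return map of $f$ on one circle of that cycle — a well-defined invariant, since the return maps at different circles of a single cycle are conjugate. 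A routine argument shows that $f$ and $f'$ are topologically conjugate iff, for every $\ell$, the multiset of return-map conjugacy classes attached to the $\ell$-cycles of $\sigma_f$ equals that of $\sigma_{f'}$. This assignment is Borel, and — granting that topological conjugacy on $\mathrm{Diff}(S^1)$ is itself classifiable by countable structures — its codomain is codeable as a space of countable structures with isomorphism matching exactly the data above, yielding a Borel reduction to isomorphism of countable structures.

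It remains to see that topological conjugacy on $\mathrm{Diff}(S^1)$ and on $\mathrm{Diff}([0,1])$ is classifiable by countable structures. By Hjorth's theorem (quoted above) this holds for \emph{homeomorphisms} of $[0,1]$, and the same argument handles $S^1$; the inclusion $\mathrm{Diff}(S^1)\hookrightarrow\mathrm{Homeo}(S^1)$ is continuous, hence Borel, and is manifestly a reduction of topological conjugacy of diffeomorphisms to topological conjugacy of homeomorphisms, so composing it with a map classifying the latter classifies the former, classifiability by countable structures being downward closed under $\leq_B$; likewise for $[0,1]$. Thus the corollary reduces to Theorem~\ref{Main 1} plus Hjorth's theorem, and the only genuine labor is the bookkeeping in the $n=1$ step — Borelness of the return-map data and completeness of the induced invariant — which is routine and in fact unnecessary if one only admits connected manifolds, for then a compact $1$-manifold is just $S^1$ (or $[0,1]$, with boundary) and Hjorth's theorem applies on the nose.
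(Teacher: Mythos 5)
Your proposal is correct and follows essentially the same route the paper intends: Theorem~\ref{Main 1} handles $n\geq 2$, and Hjorth's classifiability result for homeomorphisms of $[0,1]$ (and the circle) handles $n=1$, with classifiability being downward closed under $\leq_B$. The extra bookkeeping you supply for disconnected compact $1$-manifolds and for the reduction $f\mapsto f\sqcup\mathrm{id}_{M\setminus N}$ is sound but goes beyond what the paper records, which treats the corollary as immediate for connected $M$.
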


In \cite[Open Question 14]{Foremansurvey}, Foreman asks if $E_0$ is Borel reducible to the collection of topologically minimal diffeomorphisms of the 2-torus with the relation of topological conjugacy?

We give an affirmative answer to this question. In fact, we prove a stronger result.

\begin{theorem} \label{Main2}
    The topological conjugacy relation of minimal rotations on the $2$-torus is not amenable.
\end{theorem}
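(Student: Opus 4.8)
The plan is to show that, under the natural Borel parametrization, the topological conjugacy relation on minimal rotations of the $2$-torus coincides with the orbit equivalence relation of a free, measure-preserving action of $GL_2(\mathbb{Z})$, and then to conclude from the non-amenability of $GL_2(\mathbb{Z})$. First I would parametrize a rotation of $\mathbb{T}^2$ by its translation vector $v\in\mathbb{T}^2$, identifying $R_v$ (with $R_v(x)=x+v$) with $v$; the rotation $R_v$ is minimal precisely when $1,v_1,v_2$ are linearly independent over $\mathbb{Q}$, a condition defining a Borel set $X\subseteq\mathbb{T}^2$ which is moreover comeager and conull. Let $E$ denote the topological conjugacy relation on the minimal rotations, viewed via this identification as a Borel equivalence relation on $X$. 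The key structural input is the classical affine rigidity of minimal torus rotations: every topological conjugacy between two minimal rotations of $\mathbb{T}^2$ is affine. One proves this by observing that a conjugacy $h$ preserves Haar measure (the rotations being uniquely ergodic), so the induced Koopman operator intertwines the two rotations and hence carries each eigencharacter $e^{2\pi i\langle n,\cdot\rangle}$ to a unimodular multiple of another character (the eigenvalues of a minimal rotation being simple); the multipliers form a character of $\mathbb{Z}^2$, which forces $h$ to agree almost everywhere, and thus by continuity everywhere, with an affine map. Since $\mathrm{Aut}(\mathbb{T}^2)=GL_2(\mathbb{Z})$ and translations commute with rotations, it follows that $R_v$ and $R_w$ are topologically conjugate if and only if $w=Av$ in $\mathbb{T}^2$ for some $A\in GL_2(\mathbb{Z})$. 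Hence $E$ is exactly the orbit equivalence relation of the linear action of $GL_2(\mathbb{Z})$ on $X$.

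Next I would record the two features of this action that drive the argument. The linear action of $GL_2(\mathbb{Z})$ on $\mathbb{T}^2$ preserves the Lebesgue probability measure $\mu$, and $X$ is a $\mu$-conull invariant Borel set, so $\mu$ restricts to an invariant Borel probability measure on $X$. Moreover the action is free on $X$: if $Av=v$ in $\mathbb{T}^2$ then $(A-I)v\in\mathbb{Z}^2$, and reading off the two coordinates yields $\mathbb{Q}$-linear relations among $1,v_1,v_2$, which by the defining property of $X$ force $A=I$. Thus $E$ is the orbit equivalence relation of a free, measure-preserving Borel action of the countable group $GL_2(\mathbb{Z})$ on the standard probability space $(X,\mu)$.

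Finally, $GL_2(\mathbb{Z})$ is non-amenable, as it contains a free subgroup of rank $2$ (for instance the Sanov subgroup). By a standard fact, the orbit equivalence relation of a free, measure-preserving action of a non-amenable countable group on a standard probability space is never $\mu$-amenable --- equivalently, by Connes--Feldman--Weiss, never $\mu$-hyperfinite; one quick way to see this is that $\mu$-hyperfiniteness would make the group von Neumann algebra $L(GL_2(\mathbb{Z}))$ hyperfinite (it embeds with a conditional expectation into the hyperfinite von Neumann algebra of the orbit relation), forcing $GL_2(\mathbb{Z})$ to be amenable. Therefore $E$ is not $\mu$-amenable. Since an amenable Borel equivalence relation is $\nu$-amenable for every quasi-invariant probability measure $\nu$, we conclude that $E$, the topological conjugacy relation on minimal rotations of $\mathbb{T}^2$, is not amenable. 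The step I expect to need the most care is the first one: establishing affine rigidity and packaging it so that $E$ is, as a Borel equivalence relation, literally the orbit equivalence relation of the $GL_2(\mathbb{Z})$-action inside the standard Borel space of diffeomorphisms of $\mathbb{T}^2$. The non-amenability input is then a citation, and checking freeness and measure-preservation is routine.
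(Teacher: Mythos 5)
Your proposal is correct and follows essentially the same route as the paper: identify conjugacy classes of minimal rotations with orbits of the linear ${\rm GL}_2(\mathbb{Z})$-action on the set of rationally independent translation vectors, observe that this action is free and Lebesgue-measure-preserving, and conclude non-amenability of the orbit equivalence relation from non-amenability of ${\rm GL}_2(\mathbb{Z})$. The only difference is in one auxiliary step: you prove the affine-rigidity fact via unique ergodicity and the Koopman operator, whereas the paper restricts the conjugacy to the dense orbit of $0$, extends it to a continuous group automorphism of the torus, and lifts it to an element of ${\rm GL}_2(\mathbb{Z})$ --- both arguments are standard.
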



\section{preliminaries}
 Let $(X,d)$ be a compact metric space and $A\subset X$ is a closed subset. Let 
 $$
|A|={\rm sup}\{d(x,y),\,x,y\in A\}.
 $$
Denote by ${\rm Homeo}(X)$ the group of homeomorphisms on $X$. For $f,g\in {\rm Homeo}(X)$, we will use the notation:

$$
||f-g||={\rm sup}_{x\in X}d(f(x),g(x)).
$$
$(X,\mu)$. 

We say $E$ is \textbf{smooth} if $E\leq_B =_{\mathbb{R}}$. We say that $E$ is \textbf{classifiable by countable structures} if $E$ is Borel reducible to a Borel action of the group $S_\infty$.
Let $E$ be a countable Borel equivalence relation on a probability space  $(X,\mu)$.  $E$ is $\mu$-\textbf{amenable} if there is a sequence of Borel functions 
$$
\phi_n:E\rightarrow \mathbb{R}_{\geq0}
$$

such that
\begin{enumerate}
    \item for any $x\in X$, $\sum_{yEx} \phi_n(x,y)=1$, and
    \item there is a Borel $E$-invariant set $A\subset X$ with $\mu(A)=1$ such that for all $(x,x')\in E \cap(A\times A)$,
    $$
       \sum_{yEx}|\phi_n(x,y)-\phi_n(x',y)| \rightarrow0\,\,\mbox{as}\,\,n\rightarrow \infty.
    $$
\end{enumerate}

We say $E$ is \textbf{amenable} if $E$ is $\mu$-amenable for all Borel probability measure $\mu$. 

Let $(X,\mu)$ be a standard probability space. Denote by ${\rm MPT}(X,\mu)$ the space of measure preserving Borel transformations on $(X,\mu)$.

\begin{theorem} \label{amenable}
    $($\cite[Proposition 7.4.6]{Gaobook}$)$ Let $G$ be a countable group, $(X,\mu)$ is a standard probability space. If $G \subset {\rm MPT}(X, \mu)$ and the orbit equivalence relation $E^X_G$ is $\mu$-amenable, then $G$ is an amenable group.
\end{theorem}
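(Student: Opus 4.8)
The plan is to deduce amenability of $G$ from \emph{Reiter's criterion}: a countable group $G$ is amenable if and only if there is a sequence $f_n\in\ell^1(G)$ with $f_n\ge 0$ and $\|f_n\|_1=1$ such that $\|\lambda_h f_n-f_n\|_1\to 0$ for every $h\in G$, where $(\lambda_h f)(g)=f(h^{-1}g)$ is the left regular representation. As in the cited formulation, I use that the action of $G$ on $(X,\mu)$ is free modulo a null set, so that for $\mu$-almost every $x$ the map $g\mapsto g^{-1}\cdot x$ is a bijection of $G$ onto the $E^X_G$-class $[x]$ of $x$.

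Let $\phi_n\colon E^X_G\to\mathbb{R}_{\geq 0}$ witness $\mu$-amenability and let $A$ be the conull $G$-invariant set from condition (2). First I would pull the $\phi_n$ back to $X\times G$ by setting $\Phi_n(x,g):=\phi_n(x,g^{-1}\cdot x)$; by freeness and condition (1), for a.e.\ $x$ we get $\sum_{g\in G}\Phi_n(x,g)=\sum_{y\in[x]}\phi_n(x,y)=1$, hence $\int_X\sum_g\Phi_n(x,g)\,d\mu(x)=1$. Then define
\[
  f_n(g):=\int_X\Phi_n(x,g)\,d\mu(x)=\int_X\phi_n\bigl(x,\,g^{-1}\cdot x\bigr)\,d\mu(x),
\]
which is a probability density on $G$.

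The core of the argument is the estimate of $\|\lambda_h f_n-f_n\|_1$. We have $f_n(h^{-1}g)=\int_X\phi_n(x,\,g^{-1}h\cdot x)\,d\mu(x)$, while, after the measure-preserving change of variables $x\mapsto h\cdot x$, $f_n(g)=\int_X\phi_n(h\cdot x,\,g^{-1}h\cdot x)\,d\mu(x)$. Subtracting, applying the triangle inequality and Tonelli's theorem, and noting that for fixed $x$ the map $g\mapsto g^{-1}h\cdot x$ is a bijection of $G$ onto $[x]=[h\cdot x]$, we obtain
\[
  \|\lambda_h f_n-f_n\|_1\;\le\;\int_X\Bigl(\,\sum_{y\in[x]}\bigl|\phi_n(x,y)-\phi_n(h\cdot x,y)\bigr|\,\Bigr)\,d\mu(x).
\]
For $x\in A$ we have $(x,h\cdot x)\in E^X_G\cap(A\times A)$, so by condition (2) the inner sum tends to $0$ as $n\to\infty$; it is also dominated by $\sum_{y\in[x]}\bigl(\phi_n(x,y)+\phi_n(h\cdot x,y)\bigr)=2$. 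By dominated convergence $\|\lambda_h f_n-f_n\|_1\to 0$ for every $h\in G$, so $(f_n)$ is a Reiter sequence and $G$ is amenable.

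I expect the only real difficulty to be organizational: choosing a single conull $G$-invariant set on which condition (1), condition (2), and the bijection $g\mapsto g^{-1}\cdot x$ all hold simultaneously, and carefully aligning the translation by $h$ on $G$ with the measure-preserving translation by $h$ on $X$ so that the two displayed integrals can be compared termwise. Beyond Reiter's criterion, a change of variables, and dominated convergence, no further idea should be needed.
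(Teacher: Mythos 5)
Your proof is correct, and it is the standard argument for this result (the paper itself gives no proof here --- Theorem \ref{amenable} is quoted from Gao's book, and your Reiter-sequence construction $f_n(g)=\int_X\phi_n(x,g^{-1}\cdot x)\,d\mu(x)$ together with the change of variables $x\mapsto h\cdot x$ is essentially the proof given there). One point worth making explicit: the essential freeness of the action, which you correctly assume, is not actually stated in the paper's formulation of the theorem, and without it the statement is false (e.g.\ a faithful profinite-type action of $F_2$ on a disjoint union of finite coset spaces has finite, hence amenable, orbit equivalence relation); the paper implicitly relies on this hypothesis, since it verifies almost-everywhere freeness of the ${\rm GL}(2,\mathbb{Z})$-action in Lemma \ref{nonamenable} before invoking the theorem. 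So your added hypothesis is the right one, and the rest of your argument --- Tonelli, the bijection $g\mapsto g^{-1}h\cdot x$ onto $[x]=[h\cdot x]$, and dominated convergence with the bound $2$ --- goes through without gaps.
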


Now we look at the notion of turbulence developed by Hjorth (See \cite{Hjorthbook} for more details). Let $G$ be a Polish group acting on a Polish space $X$, for $x\in X$, take open sets $V\subset X$ and $U\subset G$ with $x\in V$ and $1\in U$, we define the local-$V$-$U$ orbit of $x$ to be
    $$
    \{ g_1\circ g_2 \circ \cdots \circ g_m x|\,\,g_i\in U\}.
    $$
    We say the action is \textbf{turbulent} if

\begin{enumerate}
    \item Every orbit is dense.
    \item Every orbit is meager.
    \item For any $x\in X$, any local-$V$-$U$ orbit of $x$ is somewhere dense. 
\end{enumerate}

If the action of $G$ on $X$ is turbulent, then $E^X_G$ is not classifiable by countable structures \cite{Hjorthbook}. We will use the following fact.

\begin{fact}
    $($\cite[Lemma 6.2.2]{Kanoveibook}$)$ $\mathbb{R}^{\omega}/c_0$ is a turbulent action and $\mathbb{R}^{\omega}/c_0$ is Borel bireducible with $[0,1]^{\omega}/c_0$.
\end{fact}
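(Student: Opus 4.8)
The plan is to treat the two assertions separately. Throughout, view $c_0$ as the separable Banach space of real sequences tending to $0$ under the sup norm $\|\cdot\|_\infty$, hence as a Polish group, acting on $\mathbb{R}^\omega$ (product topology) by translation $g\cdot x = x+g$; its orbit equivalence relation is exactly $x\sim y\iff x-y\in c_0$, and the orbit of $x$ is the coset $x+c_0$. I would first record that this action is continuous, and then that the relation to be compared against on $[0,1]^\omega$ is $x\sim y\iff x-y\in c_0$. The two tasks are then: verify Hjorth's three turbulence conditions for the $c_0$-action, and produce Borel reductions in both directions.

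For turbulence I would check the conditions directly. Density of every orbit is immediate, since a basic open set in the product topology constrains only finitely many coordinates, so a finitely supported (hence $c_0$) translate matches them. Meagerness of every orbit reduces to meagerness of $c_0$: for each fixed $k$ one has $c_0\subseteq\bigcup_M F_{k,M}$ with $F_{k,M}=\{a:|a_n|\le 1/k\text{ for all }n\ge M\}$, and each $F_{k,M}$ is closed with empty interior (it constrains infinitely many coordinates while basic open sets constrain only finitely many), so $c_0$ lies inside a meager set and all its translates are meager. The crux is the third condition. Given $x$, a basic neighborhood $V$ constraining coordinates $<N$, and $U=\{g:\|g\|_\infty<\epsilon\}$, I would show the local orbit is dense in $V$: to reach a target $y\in V$ that agrees with $x$ past some coordinate $M$, subdivide $y-x$ into $m$ equal finitely supported steps $g_i$ with $\|g_i\|_\infty<\epsilon$ for $m$ large; each partial sum moves the coordinates $<N$ monotonically between $x_n$ and $y_n$ (so it stays in $V$) while coordinates $\ge N$ are unconstrained by $V$, so the whole path lies in $V$. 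As such targets $y$ are dense in $V$, the local orbit is somewhere dense.

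For the bireducibility, one direction is trivial: the inclusion $[0,1]^\omega\hookrightarrow\mathbb{R}^\omega$ is continuous and preserves the relation verbatim, giving $[0,1]^\omega/c_0\le_B\mathbb{R}^\omega/c_0$. The substance is the reverse reduction, where the naive idea of applying a coordinatewise homeomorphism $\mathbb{R}\cong(0,1)$ fails, since its inverse is not uniformly continuous near the endpoints (for instance $x_n=n,\ y_n=n+1$ would become $c_0$-equivalent). Instead I would encode each real by a tent / partition-of-unity construction: set $\psi(r)_k=\max(0,1-|r-k|)$ for $k\in\mathbb{Z}$, so that $\psi\colon\mathbb{R}\to[0,1]^{\mathbb{Z}}$ has at most two nonzero coordinates and satisfies the two-sided estimate $\tfrac12\min(|r-r'|,1)\le\|\psi(r)-\psi(r')\|_\infty\le|r-r'|$. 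Define $f(x)=(\psi(x_n))_n$, an element of $[0,1]^\omega$ after reindexing the countable index set $\omega\times\mathbb{Z}$. Because for each $n$ at most four indices $k$ carry a nonzero difference $\psi(x_n)_k-\psi(y_n)_k$, the resulting double sequence is in $c_0$ exactly when $D_n:=\|\psi(x_n)-\psi(y_n)\|_\infty\to0$, and the estimate makes this equivalent to $|x_n-y_n|\to0$; hence $f$ is a continuous Borel reduction and the two relations are bireducible.

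I expect the two genuine obstacles to be, first, the local-orbit condition, namely checking that small-step paths realize a dense set of targets while never leaving $V$, and second, the design of $\psi$. The underlying requirement in the latter is a uniform embedding of the unbounded space $(\mathbb{R},|\cdot|)$ into the bounded space $([0,1]^\omega,\|\cdot\|_\infty)$, whose image is an infinite, uniformly separated polygonal path; the finite-support property of $\psi$ is precisely what lets the coordinatewise $c_0$ condition on the target collapse to the condition $|x_n-y_n|\to0$ on the source.
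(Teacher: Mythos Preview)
The paper does not supply a proof of this fact at all; it is simply stated with a citation to \cite[Lemma 6.2.2]{Kanoveibook} and then used as a black box. So there is no ``paper's own proof'' to compare against---your proposal is supplying an argument where the paper gives none.

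That said, your argument is correct in both halves. The turbulence verification is the standard one: density and meagerness of orbits are routine, and your small-step path argument for the local-orbit condition is exactly right (for general open $V$ you should first pass to a basic open $V'\subseteq V$ with $x\in V'$, so that the convexity argument on the finitely many constrained coordinates goes through; this still yields somewhere-density, which is all that is required). For the nontrivial direction of the bireduction, your tent-function encoding $\psi(r)_k=\max(0,1-|r-k|)$ works: the upper bound $\|\psi(r)-\psi(r')\|_\infty\le|r-r'|$ is immediate from $1$-Lipschitzness, and the lower bound $\tfrac12\min(|r-r'|,1)\le\|\psi(r)-\psi(r')\|_\infty$ can be checked by the case split $\lfloor r\rfloor=\lfloor r'\rfloor$, $|\lfloor r\rfloor-\lfloor r'\rfloor|=1$, and $|\lfloor r\rfloor-\lfloor r'\rfloor|\ge2$. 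The crucial finite-support observation---that for each $n$ at most four indices $k$ contribute a nonzero difference---is what makes the reindexed $c_0$ condition on $[0,1]^{\omega\times\mathbb{Z}}$ collapse to $\|\psi(x_n)-\psi(y_n)\|_\infty\to0$, and hence to $x-y\in c_0$. So your reduction is continuous and works as claimed.
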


\section{Minimal diffeomorphism on 2-torus} \label{MainTheorem2}

The proof of Theorem \ref{Main2} is based on several dynamical systems, group theory and descriptive set theory facts. We will only prove the 2-torus case, the same argument works for $n$-torus for every $n\geq 2$.







We will prove that the topological conjugacy relation of minimal rotations on the 2-torus is not amenable, thus $E_0$ is reducible to it by the Harrington--Kechris--Louveau theorem \cite{Dictheorem}.

Let $T_{\alpha,\beta}$ be the rotation on the 2-torus defined as:
$$
T_{\alpha,\beta}(x,y)=(x+\alpha,y+\beta)
$$
where $(x,y),(\alpha,\beta)\in (\mathbb{R}/\mathbb{Z})^2$.
\begin{lemma} \label{nonamenable}
    Let
    $$
    X=\{(\alpha,\beta)\in (\mathbb{R}/\mathbb{Z})^2| \alpha ,\,\,\beta\,\,\mbox{and}\,\,1\,\,\mbox{are rationally independent}\}
    $$

    then the natural action of ${\rm GL(2,\mathbb{Z})}$ on $X$ is not amenable.
\end{lemma}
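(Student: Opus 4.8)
The plan is to reduce the non-amenability of the $\mathrm{GL}(2,\mathbb{Z})$-action on $X$ to the well-known non-amenability of the free group $F_2$ via Theorem \ref{amenable}. Concretely, $\mathrm{GL}(2,\mathbb{Z})$ contains a copy of the free group $F_2$ (for instance the group generated by the two unipotent matrices $\left(\begin{smallmatrix}1&2\\0&1\end{smallmatrix}\right)$ and $\left(\begin{smallmatrix}1&0\\2&1\end{smallmatrix}\right)$, by the classical ping-pong lemma). Since amenability of an orbit equivalence relation passes to subgroups — if $E^X_G$ is $\mu$-amenable and $H\leq G$, then $E^X_H$ is $\mu'$-amenable for a suitable measure — it suffices to show that the action of this copy of $F_2$ on $X$ is not $\mu$-amenable for some $\mathrm{GL}(2,\mathbb{Z})$-invariant Borel probability measure $\mu$ on $X$ (or more precisely, to exhibit a measure on $X$ witnessing non-amenability of the full action).

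First I would check that $X$ carries a natural invariant measure: $X$ is a conull Borel subset of $(\mathbb{R}/\mathbb{Z})^2$ (its complement is the countable union of rational "lines", which is Lebesgue-null), it is $\mathrm{GL}(2,\mathbb{Z})$-invariant since integer matrices preserve the set of points whose coordinates together with $1$ are rationally independent, and Lebesgue measure $\lambda$ on the torus is preserved by the linear $\mathrm{GL}(2,\mathbb{Z})$-action. So $(X,\lambda)$ is a standard probability space on which $\mathrm{GL}(2,\mathbb{Z})$ acts by measure-preserving transformations, and the orbit equivalence relation is countable. Next I would argue the action is (essentially) free: for a fixed $A\in\mathrm{GL}(2,\mathbb{Z})$ with $A\neq I$, the set of fixed points $\{(\alpha,\beta): A(\alpha,\beta)=(\alpha,\beta) \bmod \mathbb{Z}^2\}$ is contained in a proper closed subgroup of the torus, hence is $\lambda$-null; taking the union over the countably many $A\neq I$ shows the action is free off a null set. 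Therefore the orbit equivalence relation $E^X_{\mathrm{GL}(2,\mathbb{Z})}$ restricted to a conull invariant set is induced by a free measure-preserving action, and likewise for the restriction to the free subgroup $F_2\leq\mathrm{GL}(2,\mathbb{Z})$.

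Now suppose toward a contradiction that $E^X_{\mathrm{GL}(2,\mathbb{Z})}$ is $\lambda$-amenable. Since $F_2$ sits inside $\mathrm{GL}(2,\mathbb{Z})$, the $F_2$-orbit equivalence relation $E^X_{F_2}$ is a subequivalence relation of $E^X_{\mathrm{GL}(2,\mathbb{Z})}$, and a subequivalence relation of a $\mu$-amenable countable Borel equivalence relation is $\mu$-amenable (the Reiter functions $\phi_n$ restrict). Hence $E^X_{F_2}$ is $\lambda$-amenable, and by Theorem \ref{amenable} applied to the free measure-preserving action of $F_2$ on $(X,\lambda)$, the group $F_2$ would be amenable — a contradiction. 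The main obstacle I anticipate is the bookkeeping around restriction of amenability to a subgroup and the verification that one genuinely has a free (or essentially free) measure-preserving action, so that Theorem \ref{amenable} applies with no loss; once freeness and invariance of $\lambda$ are in place, the argument is short. One should also double-check the ping-pong argument exhibiting $F_2$ inside $\mathrm{GL}(2,\mathbb{Z})$, though this is standard.
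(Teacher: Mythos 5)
Your proposal is correct and follows essentially the same route as the paper: check that Lebesgue measure restricts to an invariant probability measure on the conull invariant set $X$, that the action is essentially free (the fixed-point set of each $A\neq I$ is null), and then invoke Theorem \ref{amenable} together with the non-amenability of the acting group. The only difference is that the paper applies Theorem \ref{amenable} directly to ${\rm GL}(2,\mathbb{Z})$ (non-amenable since it contains $F_2$), so your detour through the subgroup $F_2$ --- and in particular the claim that $\mu$-amenability passes to subequivalence relations, which is true but needs more than ``the Reiter functions restrict,'' since the restricted functions need not sum to $1$ over the smaller classes --- can be dispensed with.
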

\begin{proof}
   Denote by $\lambda^2$ the Lebesgue measure on $(\mathbb{R}/\mathbb{Z)}^2$. Since for conull many $x\in (\mathbb{R}/\mathbb{Z)}$ (irrationals), the section of $x$ in 
  $X$ is the complement of a countable set, by Fubini's Theorem, we know that $\lambda^2(X)=1$. 

   The action of ${\rm GL(2,\mathbb{Z})}$ on $X$ is free almost everywhere since 
   $A(\alpha,\beta)^T=(\alpha,\beta)^T$ holds for $A\neq I_2$ at most for a one-dimensional eigenspace which has Lebesgue measure $0$. Since the group  $ {\rm GL(2,\mathbb{Z})}$ is not amenable and the action of ${\rm GL(2,\mathbb{Z})}$ on $(\mathbb{R}/\mathbb{Z})^2$ is Lebesgue measure preserving, by Theorem \ref{amenable},  this implies that the action of ${\rm GL(2,\mathbb{Z})}$ on $X$ is not amenable.
   \end{proof}
   Now we send any $(\alpha,\beta)\in X$ to the rotation $T_{\alpha,\beta}$, it is routine to check that this map is continuous. The following fact is standard (see \cite[Proposition 1.5]{PanHou}). We provide a proof for completeness.
\begin{fact}\label{Conjugateiff condition}
    Two minimal rotations $T_{\alpha,\beta}$ and $T_{\alpha',\beta'}$ are topologically conjugate if and only if there exists an $A\in {\rm GL}_2(\mathbb{Z})$ such that $A(\alpha,\beta)^{T}=(\alpha',\beta')^T$.
\end{fact}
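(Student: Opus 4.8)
The plan is to prove both directions of the equivalence, with the easy direction first. Suppose $A \in {\rm GL}_2(\mathbb{Z})$ satisfies $A(\alpha,\beta)^T = (\alpha',\beta')^T$. Viewing $A$ as a linear automorphism of $\mathbb{R}^2$ that preserves the lattice $\mathbb{Z}^2$ (since $A$ and $A^{-1}$ have integer entries), it descends to a homeomorphism $\widehat{A}$ of the torus $(\mathbb{R}/\mathbb{Z})^2$. A direct computation gives $\widehat{A} \circ T_{\alpha,\beta} = T_{\alpha',\beta'} \circ \widehat{A}$, because $\widehat{A}(v + (\alpha,\beta)) = \widehat{A}(v) + A(\alpha,\beta)^T = \widehat{A}(v) + (\alpha',\beta')^T$. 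So $\widehat{A}$ is the required conjugacy, and this direction does not even need minimality.

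For the converse, suppose $h$ is a homeomorphism of the torus with $h \circ T_{\alpha,\beta} = T_{\alpha',\beta'} \circ h$. The standard approach is to pass to the induced map on homology (or equivalently on the fundamental group $\mathbb{Z}^2$, or on the dual group $\widehat{(\mathbb{R}/\mathbb{Z})^2} \cong \mathbb{Z}^2$). The homeomorphism $h$ induces an automorphism $h_*$ of $H_1((\mathbb{R}/\mathbb{Z})^2;\mathbb{Z}) \cong \mathbb{Z}^2$, which is given by some matrix $A \in {\rm GL}_2(\mathbb{Z})$. The key point is to extract the rotation-vector relation from the conjugacy equation. One clean way: a continuous $h$ is homotopic to the affine map $x \mapsto Ax + c$ for some constant $c$, i.e. $h(x) = Ax + c + \phi(x)$ where $\phi : (\mathbb{R}/\mathbb{Z})^2 \to \mathbb{R}^2$ lifts to a bounded (in fact continuous periodic) map. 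Plugging into $h(x + (\alpha,\beta)) = h(x) + (\alpha',\beta')$ and comparing, we get $A(\alpha,\beta)^T + \phi(x + (\alpha,\beta)) = (\alpha',\beta')^T + \phi(x)$, so $\phi(x + (\alpha,\beta)) - \phi(x) = (\alpha',\beta')^T - A(\alpha,\beta)^T =: w$, a constant vector. Iterating, $\phi(x + n(\alpha,\beta)) = \phi(x) + nw$; since $\phi$ is bounded this forces $w = 0$, giving $A(\alpha,\beta)^T = (\alpha',\beta')^T$.

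I expect the main obstacle to be the bookkeeping around the homotopy/lifting argument — justifying that an arbitrary torus homeomorphism is homotopic to an affine map whose linear part lies in ${\rm GL}_2(\mathbb{Z})$, and that the ``error term'' $\phi$ lifts to a genuinely bounded $\mathbb{R}^2$-valued function. This is classical (it is essentially the statement that homotopy classes of self-maps of $\T^2$ are classified by the induced map on $\pi_1$, together with the fact that self-homeomorphisms induce automorphisms of $\pi_1$), so I would cite it or sketch it via covering-space theory rather than belabor it. A cosmetic alternative that avoids homotopy theory entirely is to work on the Pontryagin dual: $h$ induces an automorphism of the character group $\mathbb{Z}^2$, and evaluating the conjugacy relation on characters turns the equation $h T_{\alpha,\beta} = T_{\alpha',\beta'} h$ into an identity of functions whose ``frequency content'' pins down $A$ and the relation $A(\alpha,\beta)^T = (\alpha',\beta')^T$ by matching Fourier coefficients; here minimality (equivalently, $1, \alpha, \beta$ rationally independent, so that the relevant exponentials are non-constant and distinct) is what prevents degenerate cancellations. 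Either route works; I would present the homotopy-lifting version as the main line since it is the most transparent, and note that minimality is only needed for the ``only if'' direction.
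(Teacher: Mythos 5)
Your proof is correct, but for the ``conjugacy implies $A$'' direction it takes a genuinely different route from the paper. The paper's proof normalizes the conjugacy so that $h(0,0)=(0,0)$, notes that the conjugacy equation forces $h(n(\alpha,\beta))=n(\alpha',\beta')$ for all $n$, so that $h$ is additive on the subgroup $\{(n\alpha,n\beta)\}_{n}$; by minimality this subgroup is dense, so continuity makes $h$ a topological group automorphism of $(\mathbb{R}/\mathbb{Z})^2$, whose lift $H$ is a linear map carrying $\mathbb{Z}^2$ to $\mathbb{Z}^2$, i.e.\ a matrix $A\in{\rm GL}_2(\mathbb{Z})$, and then $A(\alpha,\beta)^T=h(\alpha,\beta)=(\alpha',\beta')^T$. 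You instead extract $A$ from the induced automorphism of $\pi_1$, write the lift as $x\mapsto Ax+c+\phi(x)$ with $\phi$ periodic and bounded, and kill the constant $w=(\alpha',\beta')^T-A(\alpha,\beta)^T$ by iterating the coboundary identity $\phi(x+(\alpha,\beta))-\phi(x)=w$. Each approach buys something: the paper's argument is shorter and avoids homotopy theory, but leans essentially on minimality (density of the orbit of the origin); yours invokes the classical classification of homotopy classes of self-maps of the torus, but in exchange never uses minimality, so it applies verbatim to non-minimal rotations --- which makes your closing remark that ``minimality is only needed for the only-if direction'' slightly off: your main line of argument does not use it at all (only your Fourier-coefficient alternative does, and only there to avoid degenerate cancellations). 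You also spell out the easy direction ($A$ descends to a conjugating homeomorphism), which the paper omits as routine; that computation is correct.
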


  \begin{proof}
          Suppose $h$ is a conjugacy map between $T_{\alpha,\beta}$ and $T_{\alpha',\beta'}$. After doing a shift, we may assume that $h(0,0)=(0,0)$.

          Note that $h$ is a group isomorphism on the subgroup $\{(n\alpha,n\beta)\}_n$ of the 2-torus. But $\{(n\alpha,n\beta)\}_n$ is dense on $(\mathbb{R}/\mathbb{Z})^2$, thus $h$ is an isomorphism of $(\mathbb{R}/\mathbb{Z})^2$. Let $H:\mathbb{R}^2 \rightarrow \mathbb{R}^2$ be the lift of $h$. Since $h$ is a group isomorphism, we know $H$ is a linear isomorphism of $\mathbb{R}^2$. Since $H$ is the lift of $h$, we know $H$ must map $\mathbb{Z}^2$ to $\mathbb{Z}^2$, thus we have $H=A$ for some $A\in {\rm GL}_2(\mathbb{Z})$.
      \end{proof}

      Together, Lemma \ref{nonamenable} and Fact \ref{Conjugateiff condition} imply the topological conjugacy relation of minimal rotations on the 2-torus is not amenable. Every non-amenable countable Borel equivalence relation is not smooth and by the Harrington--Kechris--Louveau theorem \cite{Dictheorem}, we know that $E_0$ is reducible to the topological conjugacy of minimal diffeomorphisms on the 2-torus.
      

\section{Auxiliary functions} \label{functions define}
    This section is a preparation for Section \ref{proof of reduction}. We will define two kinds of functions. One is for the construction of our diffeomorphisms, the other is for proving conjugacy.  From now on we fix a natural number $n\geq 2$ to be the dimension of a give manifold.
    
    \textbf{Functions of the first type}:
    
    Given $a\in [0,1]^d, r>0$ and $q\geq 0$, we will define a function $\psi(a,r,q)$ on the disk $D(a,r)$ (an $n$ dimensional disk centered at $a$ with diameter $r$) as follows. 

    First, we pick a $C^{\infty}$ function $\phi$ from $[0,1]$ to $[0,1]$ such that

\begin{enumerate}
    \item $\phi$ is non-increasing;
    \item $\phi[0,1/3]\equiv 1$;
    \item $\phi[2/3,1]\equiv 0$.
\end{enumerate}
Below is such a function:

\begin{center}
\begin{tikzpicture}[scale=3]

\draw[->, thin] (0,0) -- (1.1,0); 
\draw[->, thin] (0,0) -- (0,1.1); 

\draw[thick, black, smooth, samples=200, domain=0:1] plot (\x, {
    \x < 1/3 ? 1 : (
    \x > 2/3 ? 0 : 
    1 - ( (3*\x - 1)^2 * (3 - 2*(3*\x - 1)) )
    }
);

\end{tikzpicture}
\end{center}

Next define $\psi(a,r,q)$ on $D(a,r)$ to be the diffeomorphism and let $(\rho,\theta_1,\dots,\theta_n)$ be the spherical coordinates for a point on the $d$-disk $D(a,r)$.

$$
\psi(a,r,q)((\rho, \theta_1,\dots,\theta_{n-1}))=(\rho,\theta_1,\dots, \theta_{n-1}+2q\phi(\frac{\rho}{r})\pi)
$$

where $0\leq \rho\leq r$, $\theta_i\in [0,\pi)$ for $1\leq i\leq n-2$, $\theta_{n-1}\in [0,2\pi)$.

\textbf{Functions of the second type}:

   Let $x,y\in [0,1]^d$ with $x_{i}=y_i$ for all $1\leq i\leq n-1$ and $r>0$. Let $R=\Pi_{1\leq i\leq n}[a_i,b_i]\subset [0,1]^d$ with the property that both $D(x,r),D(y,r)$ are in the interior of $R$.  We will define a function $m(R,x,y,r):[0,1]^d\rightarrow [0,1]^d$ with the property:
\begin{enumerate}
    \item For all $t\in D(x,r)$, we have $m(R,x,y,r)(t)=t+(y-x)$. 
    \item $m(R,x,y,r)$ is identity on the boundary of $R$ and the rest part of $[0,1]^d$  
\end{enumerate}

Below is a picture for $m(R,x,y,r)$ for the two dimensional case. The function moves the top square centered at $x$ to the bottom square centered at $y$. $r$ is the diameter of the circle.
\\

\begin{center}
\begin{tikzpicture}
    \draw (0,0) rectangle (4,6);  
    
    \draw (2,4.5) circle (0.5);   
    \draw (2,1.5) circle (0.5);    
    
    \draw[-{Stealth[scale=1.2]}, thick] (2,4) -- (2,2);
    
    \node at (2,5.2) {Top Circle};
    \node at (2,0.8) {Bottom Circle};
    \node at (2, -0.7) {The function $m(R,x,y,r)$};
    \node at (0.3,5.8) {$R$};
\end{tikzpicture}
\end{center}

\section{Topological conjugacy of diffeomorphisms on $M$ with ${\rm dim}(M)\geq 2$} \label{proof of reduction}

In this section we prove the following:
\begin{theorem}
    Let $M$ be a manifold with ${\rm dim}(M)\geq 2$. Then $[0,1]^{\omega}/c_0$ is Borel reducible to the topological conjugacy relation of $C^{\infty}$-diffeomorphisms on $M$.
\end{theorem}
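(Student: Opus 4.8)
The plan is to build a continuous (hence Borel) map $\Phi\colon[0,1]^{\omega}\to{\rm Diff}^{\infty}(M)$, written $(x_i)_i\mapsto f_{(x_i)}$, and to show that it is a reduction from $[0,1]^{\omega}/c_0$ to the topological conjugacy relation. Since $[0,1]^{\omega}/c_0$ is turbulent and therefore not classifiable by countable structures, the theorem — and with it Theorem~\ref{Main 1} — follows at once. Everything will be supported inside a single smooth chart $U\cong\mathbb{R}^{n}$ of $M$, so the argument does not care which manifold $M$ is. Fix once and for all a point $p\in U$, and a sequence of regions $A_{1},A_{2},\dots\subset U$ shrinking to $p$ (nested annuli around $p$, with no identity ``buffer'' left between consecutive $A_{i}$, so that the blocks below are genuinely linked); $f_{(x_i)}$ will be the identity on $M\setminus\overline{\bigcup_{i}A_{i}}$. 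On $A_{i}$ we install, using the first-type functions $\psi(a,r,q)$ of Section~\ref{functions define}, a ``twist block,'' one of whose parameters — a rotation amount, or the location of the twist core inside $A_{i}$ — encodes the number $x_{i}$ at a scale $\delta_{i}\to 0$. Two routine checks are required: that $f_{(x_i)}$ is genuinely $C^{\infty}$, in particular at the accumulation point $p$, which is arranged by letting the $A_{i}$ and the supports shrink fast enough that all partial derivatives at $p$ agree with those of the identity; and that $(x_i)_i\mapsto f_{(x_i)}$ is continuous into the $C^{\infty}$ topology, which is clear from the construction.

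For the direction $x_{i}-y_{i}\to 0\ \Rightarrow\ f_{(x_i)}\sim f_{(y_i)}$ I will produce an explicit conjugacy. On $A_{i}$ the two diffeomorphisms differ only through the position of the $i$-th twist core, and the second-type function $m(R_{i},\cdot,\cdot,\cdot)$ of Section~\ref{functions define} moves that core to the required position while being the identity near $\partial R_{i}$; the candidate conjugacy $h$ is this modification on each $A_{i}$, and the identity elsewhere. The point of this direction is not that $h$ conjugates $f_{(x_i)}$ to $f_{(y_i)}$ region by region — that is built in — but that $h$ is an honest homeomorphism of $M$, i.e.\ continuous at $p$. This is exactly where $x_{i}-y_{i}\to 0$ enters: the construction is set up so that the size of the correction $h$ must make on, and (because the blocks are linked) near, $A_{i}$ is controlled by $|x_{i}-y_{i}|$, and only when these tend to $0$ does the infinite assembly converge uniformly to a homeomorphism across $p$.

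For the converse, let $h$ be a topological conjugacy from $f_{(x_i)}$ to $f_{(y_i)}$. Since $p$ will be dynamically distinguished (e.g.\ as the unique accumulation point of the fixed-point set, or of the twist cores), any such $h$ fixes $p$. The crucial step is a rigidity statement: the shrinking/nesting structure near $p$, together with the conjugacy-invariance of the rotation numbers of the invariant circles, forces $h$ to carry the $i$-th twist block of $f_{(x_i)}$ to the $i$-th twist block of $f_{(y_i)}$, and on each block $h$ is then pinned down up to the centralizer of the local rotation-type dynamics (essentially a twist). Feeding this back into the requirement that $h$ be continuous at $p$ forces the discrepancy parameters, hence $x_{i}-y_{i}$, to tend to $0$; that is, $(x_i)\mathrel{c_0}(y_i)$.

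The main obstacle is this converse, and within it the passage from ``$h$ is an arbitrary topological conjugacy'' to ``$h$ respects the block structure and is nearly rigid near $p$.'' Obtaining genuine rigidity — rather than the trivial relation, or the mere equality relation, that naive designs with disjoint, dynamically independent blocks produce — is exactly why the twist parameters, the shrinking rates, and the linking of consecutive blocks have to be chosen carefully, adapting the non-smoothness construction of Foreman and Gorodetski~\cite{ForemanGoropaper}. A secondary, technical obstacle is to keep everything in the $C^{\infty}$ category, in particular the smoothness of $f_{(x_i)}$ and of the conjugacies at the accumulation point $p$.
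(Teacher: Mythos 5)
Your overall architecture agrees with the paper's: reduce $[0,1]^{\omega}/c_0$ (turbulent, hence not classifiable by countable structures), build $f_{(x_i)}$ from a sequence of twist blocks using the type-one functions $\psi$, realize conjugacies by translating twist cores with the type-two functions $m$, and pin down an arbitrary conjugacy via the dynamically distinguished core points (in the paper, Lemma \ref{periodic point}: $a^{\alpha}_n$ is the unique point with a punctured neighborhood of $2^{n+1}$-periodic points). However, there is a genuine gap, and it sits exactly where you locate the main obstacle. Your geometry --- blocks $A_i$ shrinking to a single point $p$ --- defeats the converse direction. If the $A_i$ accumulate at one point, then the displacement needed to carry the $i$-th core of $f_{(x_i)}$ onto the $i$-th core of $f_{(y_i)}$ is at most ${\rm diam}(A_i)\to 0$ regardless of $|x_i-y_i|$, and any core-respecting assembly of local moves is automatically continuous at $p$; so the discrepancy $|x_i-y_i|$ is geometrically crushed and ``continuity at $p$'' can never detect whether it tends to $0$. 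The likely outcome of your design is that \emph{all} pairs become conjugate (or, if you instead encode $x_i$ in the rotation amount $q_i$, that conjugacy forces an equality-type invariant per block --- precisely the degenerate outcomes you say you must avoid, but for which you offer no concrete escape beyond ``choose the parameters carefully'' and an appeal to linking and centralizers). The paper avoids this by letting the blocks $H_n=[\frac{1}{n+1},\frac{1}{n}]\times[0,1]^{d-1}$ accumulate on an entire $(d-1)$-dimensional face rather than a point, and by encoding $x_i$ as a coordinate of the core in a direction that does not shrink.

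The second missing idea is the actual mechanism that converts $x_i-y_i\not\to 0$ into a topological obstruction. The paper interleaves, in the odd-indexed blocks, cores placed at a \emph{fixed dense sequence} $\{p_n\}\subset[\frac14,\frac34]$ independent of the input. Given $\alpha\not\mathrel{c_0}\beta$, one extracts subsequences with $\alpha(n_k)\to a$, $\beta(n_k)\to b\neq a$, and $p_{m_k}\to a$, and then the interleaved sequence of distinguished centers $a^{\alpha}_{n_k},a^{\alpha}_{m_k},\dots$ converges, while its image under $h$ (forced by Lemma \ref{pointed conjugacy}) oscillates between two distinct points of the accumulation face --- contradicting continuity of $h$. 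Nothing in your sketch plays this role: conjugacy-invariance of rotation numbers and ``rigidity up to the centralizer'' inside each block give you per-block information, but no way to compare the limits of the $x_i$ against the limits of the $y_i$ across blocks. Relatedly, the ``linking of consecutive blocks with no identity buffer'' is both unsubstantiated (you would then have to re-prove smoothness and re-derive the local dynamics of overlapping twists) and unnecessary: the paper's blocks are disjoint and dynamically independent, and the non-triviality of the reduction comes entirely from the accumulation geometry plus the dense interleaving.
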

\begin{proof}
Let $d={\rm dim}(M)$. We will reduce $[\frac{1}{4},\frac{3}{4}]^{\omega}/c_0$ to the conjugacy relation of diffeomorphism on $[0,1]^d$ with the property that on the boundary of $[0,1]^d$, the diffeomorphism is identity. For a manifold $M$, we can cut a window $[0,1]^d$ from $M$ and paste our diffeomorphism cubes on it. All disks are $d$-disks in $[0,1]^d$ in this section.

Let $H_n$ be the rectangle $[\frac{1}{n+1},\frac{1}{n}]\times [0,1]^{d-1}$ and let $k_n$ be $(\frac{1}{n}+\frac{1}{n+1})/{2}$.

Now, let $\alpha\in [\frac{1}{4},\frac{3}{4}]^\omega$. Fix a countable dense subset $\{p_n\}$ of $[\frac{1}{4},\frac{3}{4}]$.

Let $a$ be a point in $[0,1]^d$. Denote by $D(a,r)$ a $d$-disk centered at $a$ with diameter $r$. 

For $\alpha \in [\frac{1}{4},\frac{3}{4}]^\omega$, we will choose a sequence of points $a^{\alpha}_n\in H_n$ and define a sequence of disks centered at $a^{\alpha}_n$. 

Take $r_n<(\frac{1}{n}-\frac{1}{n+1})/{2}$. Note that the choice of $r_n$ is independent of the choice of $\alpha \in [\frac{1}{4},\frac{3}{4}]^{\omega}$.

 Denote by $D(\alpha)_{2n}$ the  $d$-disk $D(a^{\alpha}_{2n},r_{2n})$ with $a^{\alpha}_{2n}=(k_{2n},\dots,\frac{1}{2},\alpha(n))$. Denote by $D(\alpha)_{2n+1}$ the $d$-disk $D(a^{\alpha}_{2n+1},r_{2n+1})$ with $a^{\alpha}_{2n+1}=(k_{2n+1},\dots,\frac{1}{2},p_n)$. 
 
 \begin{lemma}\label{inside}
     For $\alpha \in [\frac{1}{4},\frac{3}{4}]^{\omega}$, we have $D(\alpha)_n\subset H_n$.
 \end{lemma}
 \begin{proof}
     Since $\alpha \in [\frac{1}{4},\frac{3}{4}]^{\omega}$ and clearly $r_n<\frac{1}{4}$, thus in the last $n-1$ coordinates of $a^{\alpha}_n$ are inside $H_n$ if we add or subtract $r_n$. For the first coordinate, this is true since $r_n<(\frac{1}{n}-\frac{1}{n+1})/{2}$ and all other coordinates of $a^{\alpha}_n$ are equal to $\frac{1}{2}$ which is in the middle of $H_n$.
 \end{proof}
 Here is a picture of those disks in the 2 dimensional case:

\begin{center}

\begin{tikzpicture}[scale=1.5]
   
    \draw[thick] (0,0) rectangle (4,4);
    
    \draw (2,0) -- (2,4);    
    \draw (3,0) -- (3,4);    
    \draw (3.5,0) -- (3.5,4); 
    \draw (3.75,2) node {$\cdots$}; 
    
    
    \filldraw (1,3.5) circle (0.15);
    
    \filldraw (2.5,2.2) circle (0.08);
    
    \filldraw (3.25,2.5) circle (0.04);
   
\end{tikzpicture}

\end{center}

Now let's start our construction:

 Set $f_{\alpha}$ on the disk $D(\alpha)_n$ to be $\psi(\alpha)(a^{\alpha}_n,r_n,\frac{1}{2^{n+1}})$ where $\psi$ is the type 1 function and on the rest of $[0,1]^d$, take $f_{\alpha}$ to be the identity. Note that $f_{\alpha}$ is a rotation on each disk $D(\alpha)_n$.

\begin{lemma}
    Let $\alpha\in [\frac{1}{4},\frac{3}{4}]^{\omega}$. The function $f_{\alpha}$ is a $C^{\infty}$-diffeomorphism.
\end{lemma}
\begin{proof}
    Fix $\alpha\in [\frac{1}{4},\frac{3}{4}]^{\omega}$.  By definition, $\psi(\alpha)(a^{\alpha}_n,r_n,\frac{1}{2^{n+1}})$ is the identity near the boundary of $D(\alpha)_n$, since the function $\phi=0$ on an interval containing $1$. This implies that $f_{\alpha}$ is differentiable on points which are not on $\{(0,0,\dots,0)\}\times [0,1]$, since $\phi$ is $C^{\infty}$-differentiable thus $\psi(a^{\alpha}_n,r_n,\frac{1}{2^{n+1}})$ is $C^{\infty}$-differentiable.
    
    Now let $p\in \{(0,0,\dots,0)\}\times [0,1]$, denote by $p_i$ the $i^{\rm th}$ coordinate of $p$. Let $f_{\alpha}=(f_1,\dots,f_d)$. Since for $1\leq i<d$, by our construction, $f_i$ is equal to the projection on $i^{\rm th}$ coordinate, thus we know it is differentiable. For $f_d$ and  $1<i\leq d$, the partial derivative $\frac{\partial f_d}{\partial x_i}$ is just taking derivative of the projection function of the last coordinate  which is $C^{\infty}$-differentiable. Now we prove that $\frac{\partial f_d}{\partial x_1}$ exists, let $p'$ be a point in any neighborhood of $p$, we may assume that $p'\in H_m$. Note that the distance of $p$ on the first coordinate to $0$ is greater than $\frac{1}{m+1}$ and in the last coordinate $f_{\alpha}$ is a rotation with angle at most $\frac{2\pi}{2^{m+1}}$, we have the fraction 
    $$
    \frac{|(f_d)(p)-(f_d)(p')|}{|p_1-p'_1|}<(m+1)\frac{2\pi}{2^{m+1}}r_m
    $$ 
    will converge to $0$ as $p'$ approaching $p$. In higher level of derivatives, this limit will also go to $0$ since exponential decreasing property are preserved under taking derivatives. Thus, $f_{\alpha}$ is $C^{\infty}$-differentiable on $\{(0,0,\dots,0)\}\times [0,1]$, we have $f_{\alpha}$ is a $C^{\infty}$-diffeomorphism. 
\end{proof}

 \begin{lemma}\label{periodic point}
     Let $\alpha\in [\frac{1}{4},\frac{3}{4}]^{\omega}$ and $n\in \mathbb{N}$.  The point $a^{\alpha}_{n}$ is the only point in $[0,1]^d$ which has a punctured neighborhood consisting only of $2^{n+1}$-periodic points with respect to $f_{\alpha}$.
 \end{lemma}

   \begin{proof}
       Note that by definition of $\phi$ and $\psi$, we have two types of fixed points
       \begin{enumerate}
           \item  $a^{\alpha}_n$,
           \item  fixed points which are not on $D(a^{\alpha}_n, \frac{1}{3}r_n)$.
       \end{enumerate}
       For the second type of fixed points, all of their punctured neighborhoods contain another fixed point.
   \end{proof}
\begin{lemma} \label{pointed conjugacy}
    Let $\alpha,\beta\in [1/4,3/4]^{\omega}$. Suppose two systems $([0,1]^d, f_{\alpha})$ and $([0,1]^d, f_{\beta})$ are topologically conjugate by $h$. Then $h(a^{\alpha}_n)=a^{\beta}_n$ for any $n\in \mathbb{N}$.
\end{lemma}

\begin{proof}
    By Lemma \ref{periodic point},  $a^{\alpha}_n$ is the only fixed point which has a punctured neighborhood consisting only of $2^{n+1}$-periodic points with respect to $f_{\alpha}$. And $a^{\beta}_n$ is the only point in $([0,1]^d,f_{\beta})$ with such a property. 
\end{proof}

\begin{lemma}
    If $\alpha c_0 \beta$, then $f_{\alpha}$ and $f_{\beta}$ are topologically conjugate.
\end{lemma}
\begin{proof}
    We will define a sequence of homeomorphism $h_n: H_n \rightarrow H_n$ and get the conjugacy map $h$ by pasting $h_n$ together.
    Let $R_{n}$ be a rectangle in $H_{2n}$ such that both $D(\alpha)_{2n}$ and $D(\beta)_{2n}$ are in the interior of $R_n$ . Also, since the distance of two centers of the disks $D(\alpha)_{2n}$ and $D(\beta)_{2n}$ is $|\alpha(n)-\beta(n)|$ we can assume that  
    $$
    |R_n|<4r_{2n}+2|\alpha(n)-\beta(n)|
    $$ 
    
    This upper bound is to make sure the limit of $|R_n|$ goes to $0$ as $n$ goes to infinity.
    
    Take $h_{2n}$ to be $m(R_n, a^{\alpha}_{2n}, a^{\beta}_{2n},r_{2n})$ on $R_n$. On the rest part of $H_n$, take $h_n$ to be identity. 

    Take $h_{2n+1}$ to be identity for all $n$.

    Since $h_n$ is identity on the boundary and outside $R_n$, we know $||h_n-{\rm Id}||\leq |R_n|$. Since $\alpha$ is $c_0$ equivalent with $\beta$, we know $|R_n|$ converges to $0$. Thus on  $\{(0,0,\dots,0)\}\times [0,1]$, we can define $h$ to be the identity. $h$ is clearly a homeomorphism and moving $D(\alpha)_{2n}$ to $D(\beta)_{2n}$ on each $H_{2n}$ which is a conjugacy map. 
\end{proof}

\begin{lemma}
    If $\alpha$ is not $c_0$ equivalent with $\beta$, then $f_{\alpha}$ and $f_{\beta}$ are not conjugate. 
 \end{lemma}
 \begin{proof}
    Since $\alpha$ and $\beta$ are not $c_0$ equivalent. We can find subsequence $\alpha(n_k)$ converging to $a$ and $p_{m_k}$ converging to $a$ while $\beta(n_k)$ converging to $b$ with $a\neq b$. 

    By Lemma \ref{pointed conjugacy}, suppose there is a conjugacy map $h$, we have $h(a^{\alpha}_n)=a^{\beta}_n$. Let $x_{2k}=a^{\alpha}_{n_k}$ and $x_{2k+1}=a^{\alpha}_{m_k}$. Also, let $y_{2k}=(a^{\beta}_{n_k})$ and $y_{2k+1}=a^{\beta}_{m_k}$. Then $h$ is a homeomorphism maping $x_n$ to $y_n$. But $(x_n)$ is convergent while $(y_n)$ is not. This is a contradiction.
 \end{proof}
\end{proof}
\textbf{Acknowledgement}: The author would like to thank Marcin Sabok for suggesting using the relation $c_0$ and for providing proof of Fact \ref{Conjugateiff condition}. 





\bibliographystyle{plain}
\bibliography{bibliography}
\end{document}